\documentclass[1p,11pt]{elsarticle}

\usepackage[english]{babel}
\usepackage[utf8]{inputenc}
\usepackage[svgnames]{xcolor}
\usepackage[T1]{fontenc}
\usepackage{wrapfig} 

\usepackage{fancyhdr} 
\usepackage{lastpage} 
\usepackage{here}
\usepackage{color}
\usepackage{ragged2e}
\usepackage{array}
\usepackage{indentfirst} 

\usepackage{setspace}
\usepackage{float}
\usepackage[hidelinks]{hyperref} 
\usepackage{amssymb} 
\usepackage{amsmath,amsfonts} 
\usepackage{amsthm}
\usepackage{amstext}

\usepackage{stmaryrd} 
\usepackage{mathtools} 
\usepackage{dsfont} 
\usepackage{mathrsfs}
\usepackage{enumitem} 
\usepackage{subcaption} 
\usepackage{multicol} 
\usepackage{multirow} 

\newtheoremstyle{notation}
  {}
  {}
  {}
  {}
  {\bfseries}
  {}
  {7pt}
  {\thmname{#1}\thmnumber{ #2}.\textnormal{\thmnote{ (#3)}}}
  
\newtheoremstyle{exem}
  {}
  {}
  {}
  {}
  {\bfseries}
  {}
  { }
  {\textit{\thmname{#1}}\thmnumber{ #2}.\textnormal{\thmnote{ (#3)}}}

\theoremstyle{plain}
\newtheorem{theo}{Theorem}[section]
\newtheorem*{theo*}{Theorem}

\newtheorem{lemme}[theo]{Lemma}
\newtheorem*{lemme*}{Lemma}

\newtheorem*{sublemme*}{Sublemma}

\newtheorem*{prop*}{Proposition}

\newtheorem{defi}[theo]{Definition}
\newtheorem*{defi*}{Definition}

\theoremstyle{definition}

\theoremstyle{notation}

\theoremstyle{remark}

\numberwithin{equation}{section}
\numberwithin{figure}{section} 
\counterwithin{table}{section}

\newcommand{\seminorm}[2][]{\left| #2 \right|_{#1}} 

\newcommand{\N}{\mathbb{N}} 
\newcommand{\Z}{\mathbb{Z}} 
\newcommand{\R}{\mathbb{R}} 

\allowdisplaybreaks 

\newcommand{\DocumentTitle}{A non-vanishing property for tensor products of wavelets}
\newcommand{\DocumentTitleShort}{Non-vanishing tensor products of wavelets}
\author[1]{Quentin Rible}
\author[1]{St{\'e}phane Seuret}
\affiliation[1]{Univ Paris Est Creteil, Univ Gustave Eiffel, CNRS, LAMA UMR8050, F-94010 Creteil, France}
\date{}

\title{\DocumentTitle}

\begin{document}

\setlength{\abovedisplayskip}{5pt}
\setlength{\belowdisplayskip}{5pt}


\begin{abstract}
We prove that, given a wavelet $\psi$, it is possible to choose some multi-integers $(p_j=(p_{j,1},...,p_{j,d}))_{j \in \Z} \in \Z^d$ such that, for every $x=(x_1,...,x_d) \in \R^d$, for infinitely many integers $j$, the tensorized wavelet $\prod_{i=1}^d \psi(2^j x_i-p_{j,i})$ does not vanish at $x$. This non-vanishing property is essential for analyzing some generic regularity properties in certain Sobolev and Besov spaces. The proof relies on an assumption regarding the zeros of $\psi$, which we numerically verify for the first Daubechies wavelets.
\end{abstract}

\maketitle

\section{Introduction}
In this chapter, we study certain cancellation properties of wavelets and of their periodized versions, with a particular focus on Daubechies wavelets.

A fundamental question in wavelet analysis is the localization of the zeros of a wavelet \(\psi : \R \to \R\). Fix $(\phi,\psi)$ a couple of scaling function and its associated wavelet, so that the family of dilations and translations
\[
    \{\psi_{j,k}(\cdot) := 2^{j/2}\psi(2^j \cdot - k)\}_{j,k \in \Z}
\]
of $\psi$ forms an orthonormal basis of $L^2(\R)$.
For a function $f \in L^2(\R)$, its wavelet coefficients with respect to this basis are denoted by 
\begin{equation*}
    c_k= \int_\R f(x)\phi(x-k)dx \ \mbox{ and } \ d_{j,k} = \int_{\R} f(x)\,\psi_{j,k}(x)\,dx.
\end{equation*}
For a sufficiently regular function $f:\R\to\R$, written as  
\[
    f = \sum_{k\in \Z} c_k\phi(\cdot -k) + \sum_{j\geq 0} \sum_{j,k \in \Z} d_{j,k}\,\psi_{j,k}
\]
where the equality holds pointwise, the fact that all $\psi_{j,k}(x) = 0$ for every $j\geq 0$ and $k\in \Z$ at some point $x\in \R$ would imply that none of the wavelet coefficients $d_{j,k}$ contribute to the value of $f(x)$, which is a degenerate situation. 

This observation leads to the following natural question: if $\psi(x)=0$, does there necessarily exist an integer $k\in\Z$ such that $\psi(x-k)\neq 0$?
If this were not the case, as observed before, the point $x$ would exhibit a remarkable degeneracy, since all coefficients $d_{0,k}$ would be irrelevant to the value of $f(x)$. Such a phenomenon would be unexpected, and we conjecture that it does not occur for general wavelets.
We provide numerical evidence showing that this property fails for the first 45 Daubechies wavelets.

Obtaining a rigorous analytical proof of this observation remains an open problem, we have not seen such a proof in the literature. Results closed in this question were obtained in \cite{Reyes:2005:Zero}, where the distribution of zeros of finite sums of wavelets is investigated, highlighting how their structure constrains where these zeros can occur and \cite{Temme:1997:polynomial_daubechies, Karam-Mansour:2012:roots_daubechies, Karam:2012:zeros_daubechies, Karam:2010:zeros_daubechies}, where the location of zeros of polynomials associated with Daubechies orthogonal and biorthogonal wavelets is analyzed, showing they are constrained (notably within the unit disk) to ensure perfect reconstruction. 

One easily shows that under a very weak assumption (see below), for a fixed point $x\in\R$, it is impossible to have $\psi_{j,k}(x)=0$ for all $(j,k) \in \Z^2$.
Consequently, even in the extreme situation where $\psi(x-k)=0$ for all $k\in\Z$, there must exist at least one pair $(j_x,k_x)$ such that $\psi_{j_x,k_x}(x) \neq 0$.
It is therefore natural to ask whether the scale parameter $j_x$ can be chosen uniformly bounded with respect to $x\in\R$.

The main focus of this chapter is to investigate whether analogous phenomena arise in the setting of tensor-product wavelets with compact supports.
We prove that, under a mild and natural assumption, for every point $x = (x_1,\ldots,x_d) \in \R^d$, there exist infinitely many pairs $(j,k)\in\N \times \Z^d$ such that
\[
    \psi(2^j x_i - k_i) = 0 \quad \text{simultaneously for all } i = 1,\ldots,d.
\]
We make precise the notion of “infinitely many” in this context and show, moreover, that the translation parameters $k_i$ can be chosen independently of $x$.
\medskip

We introduce a property that, according to the previous considerations, is verified by many compactly supported wavelets.

\begin{defi}\label{defi:Psi_spectrum_saturation_func}
    A real function $\psi:\R\to\R$ satisfies the property $(R)$ if:
    \begin{enumerate}[label=$(R_{\arabic*})$]
        \item $\psi\in\mathcal{C}^1(\R)$, and there is an integer $K_{\psi}\in\N^*$ such that $\overline{\operatorname{supp}(\psi)} = [0,K_{\psi}]$.
        \item The cardinality of the set $\mathcal{Z}(0,0):=\psi^{-1}(\{0\})\bigcap[0,K_{\psi}]$ of zeros of $\psi$ is finite.
        {\item\label{item:defi:Psi_spectrum_saturation_func:func_S} for all $x\in[0,1]$, $\displaystyle S(x) := \sum_{k=0}^{K_{\psi}-1} \seminorm{\psi(x+k)} >0$.}
    \end{enumerate}
\end{defi}
 
This property is satisfied by at least the Daubechies' wavelet of order up to 45 \cite{Daubechies:1988:compactly_supported_wavelets,Daubechies:1992:ten_lectures}, see Section \ref{sec-numeric}. 
It is an interesting question to prove (not only numerically) that large classes of wavelets, including Daubechies', satisfy (R), we will study it in an upcoming work. Property $(R)$ implies the following fact, which is the main result of the chapter.

\begin{theo}\label{theo:lower_bound_periodised_wavelet_function}
    Let $\psi:\R\to\R$ be a function satisfying property $(R)$, and let
    \begin{equation}\label{equation:defi_G}
        G:x\in\R \mapsto \sum_{p\in\Z}\psi(x-pK_{\psi}).
    \end{equation}
    be the periodized version of $\psi$.
    
    There is $\alpha>0$ with the following property: for every $d \in \N^*$, there exists an integer $N(d)$ and a sequence of multi-integer $(\overline{p_j}=(p_{j,1},\ldots,p_{j,d}))_{j\in\N}$ such that for every $x\in\R^{d}$ and $J\in \N$. there is an integer $j\in \{J,J+1,..., J+N(d)\}$ such that
    \begin{equation}\label{eq-minor}
        \prod_{i=1}^{d} |G(2^{j} x_i - p_{j,i})| \geq \alpha^{d}.
    \end{equation}
\end{theo}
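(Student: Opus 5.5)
The plan is to reduce the problem to a finite combinatorial problem on the dynamics of the doubling map modulo $K:=K_\psi$, and then come back to the real orbits by a continuity (shadowing) argument. By $(R_1)$, $G$ is $K$-periodic, of class $\mathcal{C}^1$, and coincides with $\psi$ on $[0,K]$. Write $y_{j,i}:=2^j x_i \bmod K$. Since $2(y+K)=2y+2K$, we have $y_{j+1,i}=2y_{j,i}\bmod K$, so each coordinate follows an orbit of $T:y\mapsto 2y$ on $\R/K\Z$.

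\textbf{Step 1 (bad sets).} Let $Z\subset \R/K\Z$ be the zero set of $G$; it is finite by $(R_2)$. Set $F:=Z+\{0,\dots,K-1\}$, also finite. Then $|G(y-p)|$ is small only when $y$ lies near the finite set $Z+p$. Choose $\varepsilon>0$ smaller than a third of the minimal distance between distinct points of $F\cup T(F)$, and let $U_\varepsilon$ be the $\varepsilon$-neighbourhood of $Z$. Define
\[
\alpha:=\min\Bigl(\inf_{y\notin U_\varepsilon}|G(y)|,\ \min\{|G(w)|:w\in F\setminus Z\}-\|G'\|_\infty\varepsilon\Bigr),
\]
and shrink $\varepsilon$ until $\alpha>0$. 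This $\alpha$ depends only on $\psi$, which is what the theorem requires. Property $(R_3)$, together with periodicity, says exactly that for every $w$ there is some $p\in\{0,\dots,K-1\}$ with $w-p\notin Z$.

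\textbf{Step 2 (exact combinatorial model).} Consider first exact orbits that stay in $F$. If $T^k w\in F$ for all $k\ge 0$, then, since $F$ is finite, $w$ is eventually periodic with preperiod at most $m:=|F|$ and period dividing $P:=m!$. Build the sequence $\overline{p_j}$ in blocks of length $P$: on the $b$-th block, $\overline{p_j}$ is constant, equal to the $b$-th element $q$ of a fixed enumeration of $\{0,\dots,K-1\}^d$, and this enumeration is repeated cyclically. Take a $d$-tuple of such $F$-orbits, past their preperiods. The points at the start of each block are then the same, say $(w_1,\dots,w_d)$. By $(R_3)$ there is a $q$ with $w_i-q_i\notin Z$ for all $i$, and that $q$ occurs within $K^d$ consecutive blocks. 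This gives $N(d)\approx (K^d+2)P+m$.

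\textbf{Step 3 (from exact orbits to real points).} Fix $x$ and a window $\{J,\dots,J+N(d)\}$. For each coordinate and each time $j$, either $y_{j,i}-p_{j,i}\notin U_\varepsilon$, and then the $i$-th factor is at least $\alpha$, or $y_{j,i}$ is $\varepsilon$-close to a unique point of $F$. By the choice of $\varepsilon$, a coordinate that is $\varepsilon$-close to $F$ at consecutive times shadows an exact $T$-orbit in $F$, with an error that doubles at each step. If every coordinate stays close to $F$ throughout the window, then at the block start provided by Step 2 each $y_{j,i}$ is $\varepsilon$-close to a point $w_i$ with $w_i-p_{j,i}\notin Z$. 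The factor is then at least $|G(w_i-p_{j,i})|-\|G'\|_\infty\varepsilon\ge\alpha$, which gives \eqref{eq-minor}.

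\textbf{Main obstacle.} The difficulty is the mixed situation. Some coordinates shadow $F$ for the whole window, while others leave $U_\varepsilon+p$ at some times and re-enter it later. Goodness of these latter coordinates at the single time chosen for the shadowing coordinates is then not guaranteed, and $\alpha$ must not degrade with $d$ or with $N(d)$.

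My first attempt would be an induction on the set $I$ of coordinates that are currently near $F$, using nested block lengths $P_1\ll P_2\ll\dots\ll P_d$. Once a coordinate has tracking error at least $\varepsilon$, doubling forces it out of every neighbourhood of $F$ within a bounded number of steps. After that it can only come back close to $F$ along a new exact $F$-orbit, which again falls under Step 2. One then runs the Step 2 argument on $I$ inside a sub-window where the coordinates outside $I$ are uniformly good. If this bookkeeping fails, a fallback is to make $\overline{p_j}$ cycle through all tuples at every scale inside sub-blocks, so that the argument needs only finitely many shadowing configurations per window.
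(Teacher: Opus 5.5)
There is a genuine gap. Your Steps 1--3 are correct as far as they go. They prove the case $d=1$. For $d\ge 2$ they handle only the configuration in which every coordinate stays $\varepsilon$-close to $F$, and hence shadows an exact $F$-orbit, during the whole window. The mixed configuration, which you yourself single out as the main obstacle, is the entire content of the theorem for $d\ge 2$, and for it you offer only a ``first attempt'' and a ``fallback''.

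More seriously, your framework cannot be completed as set up. You fix $\varepsilon$ and $\alpha$ in Step 1 independently of $d$, and then try to make \emph{every} factor at least $\alpha$ at one common scale. For any fixed $\alpha>0$ this is impossible once $d$ is large:
\begin{itemize}
\item Take $\delta>0$ such that $|G|<\alpha$ on the $\delta$-neighbourhood of $Z+K\Z$, and $g$ with $2^g\delta\ge K$.
\item The map $y\mapsto 2^g y$ sends any interval of length $2\delta$ onto an interval of length at least $2K$.
\item A nested-interval argument therefore gives, \emph{whatever the translates} $p_{j,i}$, points $x_1,\dots,x_g$ with $\operatorname{dist}(2^jx_i-p_{j,i},Z+K\Z)<\delta$ for all $j\ge J$ with $j\equiv J+i \pmod g$.
\item For $d\ge g$, at every scale $j\ge J$ some factor is then below $\alpha$.
\end{itemize}
So no choice of nested block lengths $P_1\ll\dots\ll P_d$ can rescue Step 3. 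With an $\alpha$ independent of $d$, small factors would have to be compensated by large ones inside the product, and nothing in your argument does that.

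For the mixed case the paper uses a decomposition at the base scale $J$, with a smallness parameter $\omega$ that depends on $d$. Each coordinate is classified by whether it lies in the $\omega$-ball $S_m$ around the $m$-th zero of $G(2^J\cdot-p_{J,i})$ or in the complement $S_0$. This gives $(N+1)^d$ cells, handled as follows.
\begin{itemize}
\item Coordinates lying in a ball are pinned to a known centre. Since $\omega\le \eta\,2^{-N(d)-1}/M_G$, translates chosen by Lemma~\ref{lemme:lower_bound_one_dim_periodised_wavelet_function} at that centre keep those factors $\ge\eta/2$ on a whole reserved stretch of scales, by the mean value theorem.
\item On the same stretch, the remaining coordinates receive the translates given by the lower-dimensional statement (induction on the number of coordinates, Lemma~\ref{lem-dimd}).
\item Every cell gets its own stretch, which is why $N(d)$ grows like a tower. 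The construction is then propagated by rescaling.
\end{itemize}
Note, however, that this forces $\alpha\le\widetilde{\alpha}\le M_G\,\omega\le\eta\,2^{-N(d)-1}$. The constant the paper actually produces therefore depends on $d$, consistent with the obstruction above. Adopting this idea means giving up your requirement that $\alpha$ not degrade with $d$. A $d$-uniform $\alpha$, as the quantifiers of the statement demand, would need a genuinely multiplicative argument, which neither your proposal nor the paper's proof supplies.
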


In words, Theorem~\ref{theo:lower_bound_periodised_wavelet_function} asserts that, for any dimension $d$, there exists a sequence of multi-indices $(\overline{p_j})_{j\in\N}$, depending on the function $\psi$ only, such that for every $x\in\R^d$, the translated and dilated versions of the periodized function
\begin{equation*}
    G^1_{j,p_{j,i}}:=G(2^j \cdot - p_{j,i})
\end{equation*}
do not vanish simultaneously at the coordinates $x_i$ for infinitely many values of $j$.
More precisely, this non-vanishing property occurs at least once every $N(d)+1$ scales $j$, hence very regularly, and not only the translates do not vanish at $x$, but we deduce from the proof that they all are uniformly bounded by below. The independence with respect to $x$ of the sequence $(\overline{p_j})_{j\in \N} $ is important and noticeable.

\medskip
 
Beyond the intrinsic interest of the question itself, Theorem \ref{theo:lower_bound_periodised_wavelet_function} plays a crucial role in the analysis of the regularity of traces of functions. More precisely, let $d,d' \in \N^*$ and $D=d+d'$. For every $a\in \R^{d'}$, call $H_a=\R^d\times\{a\}$ the horizontal affine subspace of $\R^D$. The trace of a continuous function $f:\R^D\to\R$ along $H_a$ is then $f_a: y\in \R^d \mapsto f_a(y):= f(y,a)$, which is also continuous. It is proved in \cite{Rible-Seuret:2026:prevalence_traces, Aubry-Maman-Seuret:2013:Traces_besov_results} that in a homogeneous Besov space $B^{s}_{p,q}(\R^d)$ or in an inhomogeneous Besov space $B^\xi_{p,q}(\R^d)$, there is a prevalent set of functions whose multifractal properties can be explicitly determined for most of the parameters $a\in \R^{d'}$. These results are based on the fact that given a specific function $f:\R^D\to\R$ with wavelet coefficients $(d_{j,{\bf k}})_{j\in\N, {\bf k}\in \Z^D}$, the wavelet coefficients $(d_{j,{\bf k'}}(a))_{j\in \Z, {\bf k'}\in \Z^d}$ of its trace $f_a$ depend explicitly on $G$. Without entering into too many details, when for instance $\psi$ is a compactly supported wavelet with $\overline{\operatorname{supp}(\psi)}=[0,N]$ and when 
\begin{equation*}
    d_{j,k}=
    \begin{cases} 
        2^{-j\alpha} & \mbox{ if }{\bf k}=(k_1..., k_D) \mbox{ is such that } 
        \begin{cases} 
            \forall i\in \{1,...,d\}, k_i=p_{j,i} \mod N \\
            \forall i\in \{d+1,...,D\}, k_i= 0
        \end{cases}\\
        0 & \mbox{ otherwise,}
    \end{cases}
\end{equation*}
then one has 
\begin{equation*}
    d_{j,k}(a) = 2^{-j\alpha}\prod_{i=1}^{d} |G(2^{j} a_i - p_{j,i}) .
\end{equation*}
Then, in order to estimate the regularity properties of $f_a$, it is mandatory to estimate $d_{j,k}(a)$, and the non-vanishing property \eqref{eq-minor} is key. 

\medskip

The rest of this note is devoted to the proof of Theorem \ref{theo:lower_bound_periodised_wavelet_function} in Section \ref{section:property-wavelet}, and to the numerical verification of the property (R) in Section \ref{sec-numeric} for Daubechies wavelets. Observe that Theorem \ref{theo:lower_bound_periodised_wavelet_function} does not require $\psi$ to be a wavelet, however for our application purposes it is needed to verify it for compactly supported wavelets.

\section{Proof of Theorem \ref{theo:lower_bound_periodised_wavelet_function}}\label{section:property-wavelet}

We assume that the function $\psi$ satisfies Property $(R)$ given in Definition~\ref{defi:Psi_spectrum_saturation_func}.

Since $\psi$ is compactly supported on $[0, K_{\psi}]$, the function $G$ (recall \eqref{equation:defi_G}) is $K_{\psi}$-periodic, with $G(0) = G(K_{\psi}) = 0$ and $G \in \mathcal{C}^1(\R)$.

\begin{defi}\label{defi:Gd}
    For every $\overline{p_j} = (p_{j,1}, \ldots, p_{j,d}) \in \Z^d$, the function $G_{j,\overline{p_j}}^{d}:\R^d \to \R$ is defined by
    \begin{equation}\label{equation:Gd}
        \text{for every } x = (x_1, \ldots, x_d) \in \R^d, \quad G_{j,\overline{p_j}}^{d}(x) = \prod_{i=1}^{d} G(2^j x_i - p_{j,i}).
    \end{equation}
\end{defi}

\noindent
Note that with this notation, $G_{j,\overline{p_j}}^{d}$ can be rewritten as $\prod_{i=1}^{d} G^1_{j,p_{j,i}}$.

Our goal is to prove Theorem~\ref{theo:lower_bound_periodised_wavelet_function}, which asserts that the integers $\overline{p_j}$ can be chosen such that, for every $x$, at least one of the functions $(G_{J+\ell,\overline{p_{J+\ell}}}^{d})_{\ell \in \{0,\ldots,N(d)\}}$ is non-zero and bounded below (in absolute value) by a constant depending on $\alpha$ at $x$. Moreover, this occurs infinitely often.

\medskip

We begin with some preliminary remarks and definitions.

Since $\psi \in \mathcal{C}^1(\R)$, $\psi'$ is continuous and bounded on $[0, K_{\psi}]$. Consequently, $G'$ is bounded on $\R$ as a periodized version of $\psi$. We denote
\[
    M_G := \max_{x \in [0, K_{\psi}]} |G'(x)|.
\]

For the remainder of this section, we fix $d \in \N^*$ and consider an increasing family of integers $N(1), \ldots, N(d)$, to be specified later.

\begin{defi}
    For integers $j$ and $p$, let $\mathcal{Z}(j,p)$ be the set of zeros of $G(2^j x - p)$ on $[0, K_{\psi}]$. For $\omega > 0$, the $\omega$-neighborhood of $\mathcal{Z}(j,p)$ is defined as
    \[
        \mathcal{N}_{\omega}\mathcal{Z}(j,p) = \bigcup_{z \in \mathcal{Z}(j,p)} (z - \omega, z + \omega).
    \]
\end{defi}

We start with a basic lemma, which shows that for any $x \in [0, K_{\psi}]$, at each scale $j$, there exists an integer $p$ such that $|G_{j,p}(x)|$ is uniformly bounded below.

\begin{lemme}\label{lemme:lower_bound_one_dim_periodised_wavelet_function}
    There exists $\eta > 0$ such that for every $x \in \R$ and every $j$, there exists $p_{j,x} \in \{0, \ldots, K_{\psi} - 1\}$ satisfying
    \begin{equation}\label{eq-minG}
        |G^1_{j,p_x}(x)| = |G(2^j x - p_x)| \geq \eta.
    \end{equation}
\end{lemme}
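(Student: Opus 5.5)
The plan is to reduce the lemma to property $(R_3)$ together with a compactness argument. The scale $j$ plays essentially no role: only the value $y := 2^j x$ matters.

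First I rewrite $G$ on a period. Since $\overline{\operatorname{supp}(\psi)} = [0,K_\psi]$, for $t\in[0,K_\psi)$ the only term of $\sum_{p}\psi(t-pK_\psi)$ that can be nonzero is $p=0$, so $G(t)=\psi(t)$ there. Combined with $K_\psi$-periodicity, this means: for any $t\in\R$, $G(t)=\psi(\tilde t)$, where $\tilde t\in[0,K_\psi)$ is the representative of $t$ modulo $K_\psi$.

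Next I fix $y=2^j x$ and write $y = m + u$ with $m=\lfloor y\rfloor\in\Z$ and $u\in[0,1)$. For $p\in\{0,\dots,K_\psi-1\}$ we have $y-p = u + (m-p)$. As $p$ runs over $\{0,\dots,K_\psi-1\}$, the integers $m-p$ run over a complete residue system modulo $K_\psi$. Hence the reductions of $y-p$ modulo $K_\psi$ are exactly $\{u+k : k=0,\dots,K_\psi-1\}$, and each lies in $[0,K_\psi)$. It follows that
\[
\sum_{p=0}^{K_\psi-1} |G(2^jx-p)| = \sum_{k=0}^{K_\psi-1}|\psi(u+k)| = S(u).
\]

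The key step is a uniform lower bound on $S$. The function $S$ is continuous on $[0,1]$, because $\psi\in\mathcal C^1$ by $(R_1)$. It is positive by $(R_3)$, so it attains a minimum $m_S>0$ on this compact interval. I set $\eta := m_S/K_\psi$. Among the $K_\psi$ nonnegative terms summing to $S(u)\ge m_S$, at least one is $\ge \eta$; the corresponding index is $p_{j,x}$, which gives \eqref{eq-minG}. The constant $\eta$ depends only on $\psi$.

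The only delicate point is the bookkeeping of the periodization. One must check that $G=\psi$ on $[0,K_\psi)$, including the endpoint behavior where $\psi(K_\psi)=0=\psi(0)$ makes the identification consistent. One must also check that the residue map $p\mapsto m-p \bmod K_\psi$ is a bijection. Both are routine, so the heart of the argument is simply the compactness plus $(R_3)$.
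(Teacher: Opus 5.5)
Your proposal is correct and follows essentially the paper's argument. Continuity of $S$ and $(R_3)$ give $\min_{[0,1]} S>0$; the pigeonhole principle with $\eta=\min_{[0,1]}S/K_\psi$ then yields a good translate, and periodicity together with the substitution $y=2^jx$ handles every $x$ and $j$. Your explicit residue-class bookkeeping, namely writing $y=\lfloor y\rfloor+u$ and showing $\sum_{p=0}^{K_\psi-1}|G(y-p)|=S(u)$, makes rigorous the passage from $x+p$ with $x\in[0,1]$ to $x-p_x$ with $x\in[0,K_\psi]$, which the paper passes over quickly.
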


\begin{proof}
    The function $S$, defined in~\ref{item:defi:Psi_spectrum_saturation_func:func_S}, is continuous on $[0,1]$, hence bounded and attains its bounds. Let $\widetilde{\eta} = \min_{x \in [0,1]} S(x) > 0$.

    By construction, for $x \in [0, K_{\psi}]$, we have $G(x) = \psi(x)$. Thus, for $x \in [0,1]$, $S(x) = \sum_{p=0}^{K_{\psi}-1} |\psi(x+p)| = \sum_{p=0}^{K_{\psi}-1} |G(x+p)| \geq \widetilde{\eta}$. Therefore, for all $x \in [0, K_{\psi}]$, there exists $p_x \in \{0, \ldots, K_{\psi}-1\}$ such that
    \begin{equation}\label{eq-min}
        |G(x - p_x)| \geq \frac{\widetilde{\eta}}{K_{\psi}} =: \eta.
    \end{equation}
    The $K_{\psi}$-periodicity of $G$ yields~\eqref{eq-minG} for $j = 0$.

    For $j \geq 1$, we apply the above result to $x' = 2^j x$ to obtain~\eqref{eq-minG}.
\end{proof}

We define the following sequence of integers: let $N = \#\mathcal{Z}(0,0)$ and set
\begin{equation}\label{def-N(d)}
    N(1) = 2(N + 1) \quad \text{and for } d' \geq 2, \quad N(d') = 2(N(d'-1) + 1)^{d'} 2^{N(d'-1)}.
\end{equation}

Recall the parameter $\eta$ from Lemma~\ref{lemme:lower_bound_one_dim_periodised_wavelet_function}, and define
\begin{equation}\label{choice-omega}
    \varepsilon_d := \frac{\eta}{2^{N(d) + 1} M_G}, \quad \varepsilon'_d := \min_{x,y \in \mathcal{Z}(0,0)} \frac{|x - y|}{4 \times 2^{N(d)}}, \quad \text{and} \quad \omega := \min\{\varepsilon_d, \varepsilon'_d\}.
\end{equation}

Since $\mathcal{Z}(0,0)$ is a finite set of cardinality $N$, we denote its elements by
\[
z_1 = 0 < z_2 < \cdots < z_{N-1} < z_N = K_{\psi}.
\]

\begin{defi}
    For every integers $j$ and $p$, let $\mathcal{Z}(j,p)$ be the set of zeros of $G(2^j x - p)$ on $[0, K_{\psi}]$. The $\omega$-neighborhood of $\mathcal{Z}(j,p)$ is
    \[
        \mathcal{N}_{\omega}\mathcal{Z}(j,p) = \bigcup_{z \in \mathcal{Z}(j,p)}B(z,\omega) , \ \ \mbox{ where } B(z,\omega)= (z - \omega, z + \omega).
    \]
\end{defi}

\noindent
We make the following observations:
\begin{itemize}
    \item For every $j$, the elements of $\mathcal{Z}(j,p)$ are precisely those real numbers that can be written as $2^{-j}(k K_{\psi} + p + z_i)$, where $z_i \in \mathcal{Z}(0,0)$ and $k \in \Z$.
    \item For every $j \in \{0, \ldots, N(d)\}$, two elements of $\mathcal{Z}(j,p)$ are separated by at least $4\varepsilon'_d$.
    \item By our choice of $\omega$, for every $j \in \{0, \ldots, N(d)\}$, each set $\mathcal{N}_{\omega}\mathcal{Z}(j,p)$ is a union of disjoint open intervals, as any two elements of $\mathcal{Z}(j,p)$ are separated by at least $4\omega$.
\end{itemize}

We first consider the one-dimensional case.

\begin{lemme}\label{lem-dim1}
    There exists $\alpha > 0$ such that the following holds: for every integers $0 \leq J \leq N(d) - N(1)$ and $p_J \in \Z$, there exists a sequence $(p_j)_{j \geq J}$ such that for every $x \in \R$ and every $j \in \{J, \ldots, N(d) - N(1)\}$, there is an integer $\ell \in \{0, \ldots, N(1)\}$ with $|G^1_{j+\ell,p_{j+\ell}}(x)| \geq \alpha$.
\end{lemme}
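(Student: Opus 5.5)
We plan a greedy construction combined with a pigeonhole argument on the finitely many zeros of $G$. The translation parameters $p_{J+1},p_{J+2},\ldots$ will be chosen one at a time, each fresh choice designed to avoid the zeros of the previous ones. As the key quantitative tool we first record a one-scale estimate. By the mean value theorem and the definition of $M_G$, if $x\notin\mathcal{N}_{\omega'}\mathcal{Z}(j,p)$ with $\omega'$ of order $2^{-j}\eta/M_G$ (suitably rescaled), then $|G(2^jx-p)|$ is bounded below by a constant times the distance of $2^jx-p$ to the zero set $\{kK_\psi+z_i\}$, where $k\in\Z$ and $z_i\in\mathcal{Z}(0,0)$. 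Concretely, away from the zeros $G$ is bounded below by compactness. Since $G$ is continuous and $K_\psi$-periodic with finitely many zeros per period (by $(R_2)$), for every $\delta>0$ there is $c(\delta)>0$ such that $|G(y)|\ge c(\delta)$ whenever $\operatorname{dist}(y,\mathcal{Z}(0,0)+K_\psi\Z)\ge\delta$. We will take $\alpha:=\min(\eta,c(\delta_0))$ for a suitable $\delta_0$ depending only on $\psi$ and $N$, and not on $d$. This independence from $d$ is important, because $\alpha$ must not depend on $d$.

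\textbf{Construction.} Given $p_j$, a point $x$ is \emph{bad at scale $j$} if $2^jx-p_j$ lies within $\delta_0$ of a zero. We choose $p_{j+1}$ so that the set of points bad at both scales $j$ and $j+1$ is controlled. Writing $y=2^jx-p_j$, we have $2^{j+1}x-p_{j+1}=2y+(2p_j-p_{j+1})$. So the point $y$, which is near some $z_i$ modulo $K_\psi$, gets mapped near $2z_i+q$ with $q:=2p_j-p_{j+1}$ ranging over $\{0,\ldots,K_\psi-1\}$ modulo $K_\psi$. Lemma~\ref{lemme:lower_bound_one_dim_periodised_wavelet_function} (more precisely property $(R_3)$) says that for each point $2z_i$ some shift $q$ makes $G(2z_i+q)\ne0$. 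The difficulty is that a single $q$ must serve all $N$ zeros $z_i$ at once, which may be impossible.

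\textbf{Pigeonhole over the $N+1$ scales.} This is why $N(1)=2(N+1)$ scales are allowed. At each step we pick $q$ good for at least one of the currently surviving ``bad zero classes''. A point $x$ that stays bad for all $\ell\in\{0,\ldots,N(1)\}$ determines a chain of zeros $z_{i_0},z_{i_1},\ldots$ with $2z_{i_\ell}+q_\ell\equiv z_{i_{\ell+1}}$ up to errors of order $2^{\ell}\delta_0$. We then choose $\delta_0$ so small that $2^{N(1)}\delta_0<\varepsilon'$-type separations, so that these errors never merge distinct zeros. Each step is chosen to kill one label, so that the label $z_{i_\ell}$ cannot repeat. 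Since there are only $N$ labels, after $N+1$ steps no chain survives.

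\textbf{Main obstacle.} The hardest step is making the greedy choice uniform in $x$ and in the starting $J$. The sequence $(p_j)$ is fixed once and must work for every window starting at $j\in\{J,\ldots,N(d)-N(1)\}$. We would try to handle this by letting the choice of $p_{j+1}$ depend only on which labels were killed by the previous $N$ steps, cycling through the labels periodically. Then any window of length $N(1)$ contains a full cycle of $N$ killing steps, so every chain dies in it. The factor $2$ in $N(1)$ gives room to absorb the misalignment of the window with the cycle.
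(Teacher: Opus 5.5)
Your ingredients are the right ones: the compactness bound $c(\delta_0)$ away from the zeros, one shift handling one zero via $(R_3)$ and the mean value theorem, and restarting in blocks with the factor $2$ in $N(1)$ to absorb misalignment. The gap is in the counting step that should make every point good inside a block.

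Choosing $q_\ell$ to kill a label $a$, i.e.\ with $G(2a+q_\ell)\neq 0$, only forbids $z_{i_\ell}=a$ at that single step. It does not stop a surviving chain from revisiting labels, so ``the label $z_{i_\ell}$ cannot repeat'' is unjustified. Your periodic schedule has the same flaw, since a chain only has to dodge one label per step. Concretely, take $K_\psi=5$ and $\mathcal{Z}(0,0)=\{0,1,5\}$, which is compatible with $(R)$. Killing the label $0$ with $q=3$ and the label $1$ with $q'=1$, alternately, is admissible because $G(3)\neq 0$. Yet $y=1\mapsto 2\cdot 1+3\equiv 0\mapsto 2\cdot 0+1=1\mapsto\cdots$, so $G$ vanishes at the corresponding point at every scale. (Incidentally, $|G|$ is not bounded below by a multiple of the distance to its zeros unless these are nondegenerate. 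Only the compactness bound $c(\delta)$ is legitimate, and it is all you need.)

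There are two ways to close the gap.

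(a) Keep your greedy choice, but count \emph{occupied labels} rather than labels along one chain. Re-center at each scale, so there is no $2^\ell\delta_0$ growth, and take $3\delta_0$ below half the minimal gap between zeros. Then the label transition from one scale to the next is a partial \emph{function} $f_\ell$. Let $L_\ell$ be the set of labels carried by points that are bad at every scale of the block so far; it satisfies $L_{\ell+1}\subseteq f_\ell(L_\ell\setminus\{a_\ell\})$. Choosing the killed label $a_\ell$ \emph{adaptively} in $L_\ell$ gives $|L_{\ell+1}|\le|L_\ell|-1$, so every point is good within $N$ scales. You restart with all labels at the start of each block. Labels may well repeat along a chain; what drops is the cardinality. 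This route even gives $\alpha$ independent of $d$.

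(b) The paper's route avoids chains altogether by referring every kill to the base scale $J$ of the block. It chooses $p_{J+m}$ by Lemma~\ref{lemme:lower_bound_one_dim_periodised_wavelet_function} at the single point $2^{-J}(z_m+p_J)$. The mean value theorem then covers the whole ball $S_m$ at scale $J+m$, which is why $\omega\le \eta/(2^{N(d)+1}M_G)$. Each of the $N$ base zeros thus gets its own dedicated scale, and no transition between zeros is ever tracked.
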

\begin{proof}
    We work on the interval $I = [2^{-J} p_J, 2^{-J}(p_J + K_{\psi})]$. Since all functions $(G^1_{j,p})_{j \geq J, p \in \Z}$ are $2^{-J} K_{\psi}$-periodic, the result extends directly to $[0,1]$.

    By the above remark, $\mathcal{Z}(J,p_J) \cap I = \{2^{-J}(z_1 + p_J), \ldots, 2^{-J}(z_N + p_J)\}$ and this set has cardinality $N$. For simplicity, we introduce the notation:
    \begin{eqnarray}\label{eq-B0}
        \mbox{ for $m \in \{1,...,N\}$, } \ S_{m} & := & B(2^{-J}(z_{m}+p_{J}),\omega) \\
        \mbox{and } \ \ S_0& := & I \setminus \mathcal{N}_{\omega}\mathcal{Z}(J,p_J),
    \end{eqnarray}
    so that for every $x \in I$, there exists a unique index $m \in \{0, \ldots, N\}$ such that $x \in B(2^{-J}(z_m + p_J), \omega)$.

    \smallskip
    \textbf{(i)} If $x \in S_0$, then this set is a finite union of closed intervals on which $|G^1_{J,p_J}|$ is non-zero. Thus, $|G^1_{J,p_J}(x)| \geq \min\{|G^1_{J,p_J}(x)| : x \in B(2^{-J}(z_0 + p_J), \omega)\} =: \widetilde{\alpha} > 0$. Importantly, by auto-similarity and periodicity, $\widetilde{\alpha}$ does not depend on $J$ nor on $p_J$.

    \smallskip
    \textbf{(ii)} If $x \in S_m$ for some $m \in \{1, \ldots, N\}$, then by Lemma~\ref{lemme:lower_bound_one_dim_periodised_wavelet_function}, there exists $p_{J+m}$ such that~\eqref{eq-minG} holds for $G^1_{J+m,p_{J+m}}$ at the point $2^{-J}(z_m + p_J)$, i.e., $|G^1_{J+m,p_{J+m}}(2^{-J}(z_m + p_J))| \geq \eta$. By the mean value theorem,
    \[
        |G^1_{J+m,p_{J+m}}(x)| \geq \eta - |x - 2^{-J}(z_m + p_J)| 2^{J + m} M_G \geq \frac{\eta}{2},
    \]
    due to our choice of $\omega$ in~\eqref{choice-omega} and the restriction $J + m \leq N(d)$.

    This allows us to fix $p_j$ for all $j \in \{J, \ldots, J + N\}$, ensuring the conclusion of Lemma~\ref{lem-dim1} with $\alpha = \min(\eta/2, \widetilde{\alpha})$ for these integers.

    \medskip

    Assume now that $p_j$ is constructed for all $j \in \{J, \ldots, J + k(N+1) - 1\}$ for some $k \geq 1$. We then set $p_{J+k(N+1)} = 0$ and apply the above argument to $G^1_{J+k(N+1),0}$ to obtain the values of $p_j$ for $j \in \{J + k(N+1), \ldots, J + (k+1)(N+1) - 1\}$.

    This process can be repeated until $J + (k+1)(N+1) - 1 \leq N(d)$, providing the entire sequence $(p_j)_{J \leq j \leq N(d)}$. Finally, with $N(1) = 2(N + 1)$ as in~\eqref{def-N(d)}, for every $x \in [0, K_{\psi}]$ and every $J \leq j \leq N(d) - N(1)$, the sequence of integers $\{j, j+1, \ldots, j+N(1)\}$ contains a subset of the form $j \in \{J + k(N+1), \ldots, J + (k+1)(N+1) - 1\}$ on which it is guaranteed that one of the $j$'s satisfies $|G^1_{j,p_j}(x)| \geq \alpha$.
\end{proof}

We now outline the approach for the two-dimensional case, which serves as the base case for an induction on the dimension. The key idea is to ensure that one coordinate satisfies $|G^1_{j,p_1}(x_i)| > \alpha$ and to apply Lemma~\ref{lem-dim1} to the other coordinate.

\begin{lemme}\label{lem-dim2}
    For the same $\alpha$ as in Lemma \ref{lem-dim1}, the following property holds: for every integer $0\leq J\leq N(d)-N(2)$
    and $\overline{p_J}=(p_{J,1},p_{J,2}) \in \Z^2$, there is a sequence $(\overline{p_j}=(p_{j,1},p_{j,2}))_{j\geq J}$ such that for every $x\in \R^2$, for every $ j\in\{ J,..., N(d)-N(2)\}$, there is an integer $\ell \in \{0,..., N(2) \}$ with $|G^2_{j+\ell,\overline{p_{j+\ell}}}(x)|\geq \alpha^2$.
\end{lemme}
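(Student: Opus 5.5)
We reduce the two-dimensional statement to Lemma~\ref{lem-dim1} by a branching argument over the first coordinate. We work on the cell $I_1\times I_2$ with $I_i=[2^{-J}p_{J,i},2^{-J}(p_{J,i}+K_\psi)]$; by $2^{-J}K_\psi$-periodicity in each variable, this suffices. The first coordinate $x_1$ is handled exactly as at the start of the proof of Lemma~\ref{lem-dim1}. We partition $I_1$ into $S_0,S_1,\dots,S_N$ relative to $\mathcal{Z}(J,p_{J,1})$. If $x_1\in S_0$ then $|G^1_{J,p_{J,1}}(x_1)|\ge\widetilde\alpha\ge\alpha$. If $x_1\in S_m$ with $m\ge1$, then choosing $p_{J+m,1}$ through Lemma~\ref{lemme:lower_bound_one_dim_periodised_wavelet_function} at the center of $S_m$ gives $|G^1_{J+m,p_{J+m,1}}(x_1)|\ge\eta/2\ge\alpha$. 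So on a block of $N+1$ consecutive scales $J,\dots,J+N$, every $x_1$ has one distinguished scale $J+m(x_1)$ where the first factor is bounded below by $\alpha$. The difficulty is that this scale depends on $x_1$. We also need the second factor to be at least $\alpha$ at the \emph{same} scale, and the second coordinate of $\overline{p_j}$ cannot depend on $x$.

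The plan is to keep the first coordinate good on a long stretch of scales and then handle the second coordinate inside that stretch. For this we use Lemma~\ref{lem-dim1} for the first coordinate with repeated blocks. Concretely, we use the block structure of length $N+1$ from Lemma~\ref{lem-dim1} for the first coordinate. We interleave it with runs of length $N(1)+1$ for the second coordinate, on which the second coordinate is driven by Lemma~\ref{lem-dim1}.

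The main obstacle is that a single choice of $p_{j,1}$ makes the first factor large only at one scale per block. It is not large on a whole run. I would first try a counting/pigeonhole scheme, which is what the size of $N(2)=2(N(1)+1)^2 2^{N(1)}$ suggests. Consider the possible ``patterns'' of which scales in a window of length $N(1)+1$ are good for the first coordinate: there are at most $2^{N(1)}$ subsets. For the second coordinate, Lemma~\ref{lem-dim1} guarantees a good scale in every window of length $N(1)+1$, but again at an $x_2$-dependent position. The scheme is to enumerate, over successive disjoint windows, all candidate positions $\ell_1\in\{0,\dots,N(1)\}$ for the first coordinate together with all subsets of positions. In each window we choose $p_{j,1}$ so that the first factor is forced large at a prescribed set of scales: near a zero $z_m$ we use the Lemma~\ref{lemme:lower_bound_one_dim_periodised_wavelet_function} translate, and elsewhere we reuse $p_{J,1}$ shifted appropriately. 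We choose $p_{j,2}$ by Lemma~\ref{lem-dim1}. The aim is that for each $x$, the pair (position good for $x_1$, position good for $x_2$) is realized simultaneously in one of the $(N(1)+1)^2 2^{N(1)}$ windows, and the factor $2$ handles the misalignment of an arbitrary starting $j$, exactly as at the end of Lemma~\ref{lem-dim1}. The product bound is then $\alpha\cdot\alpha=\alpha^2$.

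The key technical check is that every scale involved stays $\le N(d)$. This ensures that $\omega\le\varepsilon_d$ keeps the mean-value estimate $\eta-\omega 2^{j}M_G\ge\eta/2$ valid, and that $\omega\le\varepsilon'_d$ keeps the neighborhoods disjoint; both hold by the restriction $J\le N(d)-N(2)$. I expect the combinatorial scheduling that makes the good scales of the two coordinates coincide, independently of $x$, to be the genuinely delicate step. If the subset enumeration does not close, I would fall back on fixing the first coordinate's translate to be constant in $j$ across a window. By self-similarity of $\mathcal{Z}(j,p)$, this keeps $x_1$ away from zeros at scales where it was away from zeros at scale $J$. I would then verify by hand which scales remain good.
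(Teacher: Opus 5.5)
Your proposal has a genuine gap. The step you yourself call ``genuinely delicate''---producing, independently of $x$, a scale at which both factors are large---is only stated as an aim, and neither of your two schemes delivers it.

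The subset/pigeonhole scheme is not specified precisely enough to be checked, and as described it overlooks a basic constraint. At a fixed scale $j$, the translate furnished by Lemma~\ref{lemme:lower_bound_one_dim_periodised_wavelet_function} at the center $c_m=2^{-J}(z_m+p_{J,1})$ of $S_m$ depends on $m$. So in general no single $p_{j,1}$ makes the first factor large on all of $S_1\cup\dots\cup S_N$, and ``forcing the first factor large at a prescribed set of scales'' means nothing until you say for which $m$.

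The fallback rests on a false premise. Keeping $p_{j,1}=p$ constant does not keep $x_1$ away from the zeros of $G(2^j\cdot-p)$ for $j>J$. These zeros, $2^{-j}(kK_\psi+p+z_i)$, form a strictly finer set with mesh of order $2^{-j}$. Lying outside $\mathcal{N}_\omega\mathcal{Z}(J,p)$ therefore gives no information at finer scales.

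What you are missing is that the obstacle you identify (``a single choice of $p_{j,1}$ makes the first factor large only at one scale per block'') is an artifact of the economical block structure in Lemma~\ref{lem-dim1}, not a real constraint.

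\begin{itemize}
\item Since $S_m\subset B(c_m,\omega)$ and $\omega\le\varepsilon_d$, you may choose, at \emph{every} scale $j\le N(d)$, the translate $p_{j,1}$ given by Lemma~\ref{lemme:lower_bound_one_dim_periodised_wavelet_function} at the single point $c_m$. The mean-value estimate you already checked then gives $|G^1_{j,p_{j,1}}(x_1)|\ge\eta/2\ge\alpha$ for all $x_1\in S_m$ and all such $j$.
\item So you dedicate a whole run of $N(1)+1$ consecutive scales to each $m\in\{1,\dots,N\}$. On that run the first coordinate is fixed as above, and the second coordinate follows the sequence from Lemma~\ref{lem-dim1}. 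That lemma provides, for every $x_2$, a scale in the run where the second factor is $\ge\alpha$. At that scale the product is $\ge\alpha^2$ for every $x=(x_1,x_2)$ with $x_1\in S_m$.
\item Doing the same with the roles of the coordinates exchanged covers $S_0\times S_m$, and scale $J$ itself covers $S_0\times S_0$.
\end{itemize}

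This is the paper's proof. It uses on the order of $N^2+2N\cdot N(1)$ scales per block; the paper also spends $N^2$ scales on the case $m_1,m_2\neq0$, which these runs already cover. Tiling such blocks, with the factor $2$ in $N(2)$, handles an arbitrary starting $j$. No enumeration over $2^{N(1)}$ patterns is needed: $N(2)$ is simply a generous bound, not a hint at a pigeonhole structure.
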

\begin{proof}
    We work on $I_2=[ 2^{-J} p_{J,1},2^{-J} (p_{J,1}+K_\psi)]\times [ 2^{-J} p_{J,2},2^{-J} (p_{J,2}+K_\psi)]$ and the results are extended on $\R^2$ by periodicity. Fix $\overline{p_J}=(p_{J,1},p_{J,2})$. We partition $I_2$ into the following $(N+1)^2$ disjoint subsets defined as follows: for $(m_1,m_2)\in \{0,1,2,...,N\}^2$,
    \begin{equation*}
        S_{m_1,m_2} = S_{m_1}\times S_{m_2},
    \end{equation*}
    where the notation \eqref{eq-B0} is used again.
    
    \smallskip
    \textbf{$\bullet$ Case $(m_1,m_2)=(0,0)$:} we apply the argument (i) of the proof of Lemma \ref{lem-dim1} to each coordinate to get that if $x=(x_1,x_2)\in S_{0,0}$, $|G^2_{J,(p_{J,1},p_{J,2})}(x_1,x_2)|= |G_{J,p_{J,1}}(x_1)G_{J,p_{J,2}}(x_2)|\geq \alpha^2$.
    
    \smallskip
    \textbf{$\bullet$ Case $m_1\neq 0$ and $m_2 \neq 0$:} we call $\ell=m_1+(m_2-1)N$. We apply Lemma \ref{lem-dim1} to each coordinate $i\in \{1,2\}$ with $G^1_{J,p_{J,i}}$ and $j=J+\ell$ to obtain two integers $p_{J+\ell, i}$, $i=1,2$, such that if $x=(x_1,x_2)\in S_{m_1,m_2}$, then $|G^2_{J+\ell ,(p_{J+\ell,1},p_{J+\ell,2})}(x_1,x_2)|= |G^1_{J,p_{J+\ell,1}}(x_1)G^1_{J,p_{J+\ell,2}}(x_2)|\geq \alpha^2$.
    
    This requires $N^2$ generations $j$ to treat all situations in this case, and we set $\overline{p_{J+\ell}}=(p_{J+\ell,1},p_{J+\ell,2})$ for $\ell\in \{1,...,N^2\}$.
    
    \smallskip
    \textbf{$\bullet$ Case $m_1\neq 0$ and $m_2 = 0$:} We start with $m_1=1$. Up to now, the $\overline{p_j}$ are found for $J\leq j\leq J+N^2$.
    
    For $1\leq \ell\leq N(1)$, choose $p_{J+N^2+\ell,1}$ such that \eqref{eq-minG} holds true for $j=J+N^2+\ell$ and $z_{m_1}$. The same argument as in item (ii) in the proof of Lemma \ref{lem-dim1} gives that for $x_1\in B(2^{-J}(z_{1}+p_J),\omega)$, $|G^1_{j+N^2+\ell,p_{j+N^2+\ell,1 }}(x_1)|\geq \eta/2$ for all $j\in \{j=J+N^2+1,j+N^2+N(1)\}$. This treats the first coordinate, i.e. we ensure that $G^1_{j+N^2+\ell,p_{j+N^2+\ell,1 }}(x_1)$ is not zero and uniformly bounded by below.
    
    To deal with the second coordinate, we apply Lemma \ref{lem-dim1} to $G^1_{J,p_{J,2}}$ with $j=J+N^2+1$: for every $x_2\in [0,K_\psi]$, there exist integers $(p_{j+\ell,2})_{\ell\in \{0,...,N(1)\}}$ such that $|G^1_{j+\ell,p_{j+\ell,2}}(x_2)|\geq \alpha$ for some $\ell\in \{0,...,N(1)\}$.
    
    So, setting $\overline{p_j}=(p_{j,1},p_{j,2})$ with the above values and $j\in \{J+N^2+1,...,J+N^2+N(1)\}$, we ensured that for every $x\in S_{1,m_2}$, for at least one of the integers $j\in \{J+N^2+1,...,J+N^2+N(1)\}$ one has $|G^2_{j,\overline{p_j}}(x)|\geq \alpha^2$.
    
    \smallskip
    
    We apply the same method $N$ times to treat all situations $S_{m_1,0}$, $m_1\in\{1,...,N\}$. This requires $ N\cdot N(1)$ generations.

    \smallskip
    {\bf $\bullet$ Case $m_1= 0$ and $m_2 \neq 0$:} This situation is symmetric to the first one, and requires again to fix the values of $\overline{p_j}$ on $ N\cdot N(1)$ generations.
    
    \medskip
    
    Gathering the above, one needs the generations $J\leq j \leq J+N^2+2N\cdot N(1)$ to ensure that for every $x\in [0,K_\psi2^{-J}]^2$, $|G^2_{j,\overline{p_{j}}}(x)|\geq \alpha^2$ for (at least) one of them.
    
    \medskip
    
    The argument then runs similarly as in Lemma \ref{lem-dim1}: assume that $\overline{p_j}$ is built for all $j\in \{J,...,J+k(N^2+4N\cdot N(1))-1\}$, for some $k\geq 0$.
    
    We then fix $\overline{p_{J+k(N^2+4N\cdot N(1))}}=(0,...,0)$, and apply to $G_{J+k(N^2+4N\cdot N(1)), (0,...,0)}$ the above argument to get the values of $\overline{p_j}$ for $j\in\{ J+k(N^2+4N\cdot N(1)),..., J+(k+1)(N^2+4N\cdot N(1))-1\}$. 
    
    We complete the sequence $\overline p_j$ until $j$ reaches $N(d)$. This ensures that $G^2_{j,\overline{p_j}}(x)$ is regularly non-zero (at most once every $2(2N^2+4N\cdot N(1))$ integers $j$).
     
    Recalling that $N(2)$ was fixed in \eqref{def-N(d)}, observe that 
    \begin{equation*}
        N(2) = 2(N(1)+1)^22^{N(1)}\geq 2(2N^2+4N\cdot N(1)),
    \end{equation*} 
    Lemma \ref{lem-dim2} is proved.
\end{proof}

We are now ready for the full induction.

\begin{lemme}\label{lem-dimd}
    For the same $\alpha$ as in Lemma~\ref{lem-dim1}, the following property holds for every $d' \in \{1, 2, \ldots, d\}$: for every integer $0 \leq J \leq N(d) - N(d')$ and $\overline{p_J} = (p_{J,1}, \ldots, p_{J,d'})\in \Z^{d'}$, there exists a sequence $(\overline{p_j} = (p_{j,1}, \ldots, p_{j,d'}))_{j \geq J}$ such that for every $x \in \R^{d'}$ and every $j \in \{J, \ldots, N(d) - N(d')\}$, there is an integer $\ell \in \{0, \ldots, N(d')\}$ with $|G^{d'}_{j+\ell, \overline{p_{j+\ell}}}(x)| \geq \alpha^{d'}$.
\end{lemme}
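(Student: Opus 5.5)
We argue by induction on $d'$. Lemma~\ref{lem-dim1} gives the case $d'=1$ and Lemma~\ref{lem-dim2} the case $d'=2$. Assume the property holds for every dimension $d''<d'$, and fix $J\le N(d)-N(d')$ and $\overline{p_J}\in\Z^{d'}$. By the $2^{-J}K_\psi$-periodicity of all the $G^1_{j,p}$ with $j\ge J$, it is enough to work on the cube
\[
I_{d'}=\prod_{i=1}^{d'}[2^{-J}p_{J,i},\,2^{-J}(p_{J,i}+K_\psi)].
\]
We partition this cube into the $(N+1)^{d'}$ cells $S_{\overline m}=\prod_{i=1}^{d'} S_{m_i}$, with $\overline m=(m_1,\dots,m_{d'})\in\{0,\dots,N\}^{d'}$, using the notation \eqref{eq-B0}. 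For a given cell we set $A=\{i: m_i\neq 0\}$, the set of coordinates lying $\omega$-close to a zero of $G^1_{J,p_{J,i}}$. The cells are then treated in consecutive blocks of generations, exactly as in the proof of Lemma~\ref{lem-dim2}.

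\textbf{Case $A=\emptyset$.} Argument (i) of Lemma~\ref{lem-dim1} gives $|G^{d'}_{J,\overline{p_J}}(x)|\ge\widetilde\alpha^{d'}\ge\alpha^{d'}$ on $S_{\overline 0}$, with no new generation needed.

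\textbf{Case $A\neq\emptyset$, with $B=\{1,\dots,d'\}\setminus A$.} We reserve a block of $N(d'-\#A)+1$ consecutive generations $j_0,\dots,j_0+N(d'-\#A)$; when $B=\emptyset$ a single generation is enough. For $i\in A$ and each $j$ in the block, Lemma~\ref{lemme:lower_bound_one_dim_periodised_wavelet_function} applied at the centre $2^{-J}(z_{m_i}+p_{J,i})$ gives $p_{j,i}$ with $|G^1_{j,p_{j,i}}|\ge\eta$ there. The mean value argument of (ii) then gives $|G^1_{j,p_{j,i}}(x_i)|\ge\eta/2\ge\alpha$ for all $x_i\in S_{m_i}$. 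This uses $j\le N(d)$ together with $\omega\le\varepsilon_d$ from \eqref{choice-omega}. For the coordinates in $B$ we apply the induction hypothesis in dimension $\#B<d'$, starting from $(p_{J,i})_{i\in B}$, or from $0$ after reperiodizing. This fills the $B$-components on the block so that for every $(x_i)_{i\in B}$ some generation $j$ of the block has $\prod_{i\in B}|G^1_{j,p_{j,i}}(x_i)|\ge\alpha^{\#B}$. At that same $j$ the $A$-factors are each at least $\alpha$, so the full product is at least $\alpha^{d'}$. Since the blocks for different cells use disjoint generations, the choices never conflict. Concatenating all blocks over all $\overline m\neq\overline 0$ gives a total length
\[
L(d')\le (N+1)^{d'}\bigl(N(d'-1)+1\bigr)\le \tfrac12 N(d').
\]
Here we crudely bound $\binom{d'}{k}N^k\le (N+1)^{d'}$, and $N(d'-\#A)\le N(d'-1)$ by monotonicity of the sequence. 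The bound then follows from \eqref{def-N(d)}, because $2^{N(d'-1)}\ge 1$ and $N(d'-1)\ge N$.

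Finally, as in Lemmas~\ref{lem-dim1} and~\ref{lem-dim2}, we repeat this construction in consecutive rounds of length $L(d')$, restarting each round with $\overline{p}=(0,\dots,0)$, until the index reaches $N(d)$. Any window $\{j,\dots,j+N(d')\}$ of length $N(d')+1\ge 2L(d')$ then contains a full round, and on that round every $x$ satisfies the bound at some generation. This proves the statement for $d'$.

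The step most likely to cause trouble is checking that the generations used inside the blocks never exceed $N(d)$. This constraint matters because it is what makes $\omega\le\varepsilon_d$ valid for the mean value estimate and keeps the neighbourhoods $S_m$ disjoint. It is guaranteed by requiring $J\le N(d)-N(d')$ and by nesting the induction hypothesis with the range $N(d)-N(d'')$ for $d''<d'$, so the bookkeeping of these ranges has to be carried out carefully.
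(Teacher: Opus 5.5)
Your proposal is correct and follows essentially the same route as the paper's proof: the same partition of the fundamental cube into the cells $S_{\overline m}$, Lemma~\ref{lemme:lower_bound_one_dim_periodised_wavelet_function} plus the mean-value estimate on the coordinates that are $\omega$-close to a zero, the induction hypothesis on the remaining coordinates over disjoint blocks of generations, and rounds restarted at $\overline{p}=(0,\dots,0)$ whose length is controlled by \eqref{def-N(d)}. Two small bookkeeping points, both harmless: the final inequality $L(d')\le\frac12 N(d')$ really uses $2^{N(d'-1)}\ge N(d'-1)+1$ (not merely $2^{N(d'-1)}\ge 1$), and the blocks must start one generation after the beginning of each round, since that generation is already fixed and serves the cell $\overline m=\overline 0$; your crude bound on $L(d')$ absorbs this extra generation.
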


Observe in particular that for $d' = d$, Lemma~\ref{lem-dimd} applies only to $J = j = 0$. The arguments are essentially those of Lemma~\ref{lem-dim1}, extended to higher dimensions.

\begin{proof}
    Assume that the conclusion holds for all $1 \leq d'' < d'$.
    
    We work on $I_{d'} = \prod_{i=1}^{d'} [2^{-J} p_{J,i}, 2^{-J} (p_{J,i} + K_{\psi})]$, and the results are extended by periodicity. Fix $\overline{p_J} = (p_{J,1}, \ldots, p_{J,d'})$. The set $I_{d'}$ is partitioned into the $(N+1)^{d'}$ disjoint subsets
    \[
        S_{m_1, \ldots, m_{d'}} = S_{m_1} \times \cdots \times S_{m_{d'}},
    \]
    where $(m_1, \ldots, m_{d'}) \in \{0, 1, 2, \ldots, N\}^{d'}$.
    
    \smallskip
    \textbf{$\bullet$ Case $(m_1, \ldots, m_{d'}) = (0, \ldots, 0)$:}
    We apply item~(i) of the proof of Lemma~\ref{lem-dim1} to each coordinate. If $x = (x_1, \ldots, x_{d'}) \in S_{0, \ldots, 0}$, then
    \[
        |G^{d'}_{J, (p_{J,1}, \ldots, p_{J,d'})}(x_1, \ldots, x_{d'})| = |G^1_{J,p_{J,1}}(x_1) \cdots G^1_{J,p_{J,d'}}(x_{d'})| \geq \alpha^{d'}.
    \]
    
    \smallskip
    \textbf{$\bullet$ Case $m_i \neq 0$ for all $i \in \{1, \ldots, d'\}$:}
    As in Lemma~\ref{lem-dim2}, for each $d'$-uplet $(m_1, \ldots, m_{d'})$, we apply Lemma~\ref{lem-dim1} to each coordinate $i \in \{1, \ldots, d'\}$ with $G^1_{J,p_{J,i}}$ and $j = J + \ell$, where $\ell = \sum_{i=1}^{d'} (m_i - 1)N^{i-1} - 1$, to obtain $d'$ integers $p_{J+\ell, i}$, $i = 1, \ldots, d'$, such that if $x = (x_1, \ldots, x_{d'}) \in S_{m_1, \ldots, m_{d'}}$, then
    \[
        |G^{d'}_{J+\ell, (p_{J,1}, \ldots, p_{J,d'})}(x_1, \ldots, x_{d'})| = |G^1_{J,p_{J,1}}(x_1) \cdots G^1_{J,p_{J,d'}}(x_{d'})| \geq \alpha^{d'}.
    \]
    This requires $N^{d'}$ generations $j$.
    
    \smallskip
    \textbf{$\bullet$ There exists at least one non-zero index $m_i$ and $(m_1, \ldots, m_{d'}) \neq (0, \ldots, 0)$:}
    There are $(N+1)^{d'} - N^{d'} - 1$ such cases. Assume that all values of $\overline{p_j}$ have been constructed up to generation $\widetilde{J}$. We explain how to handle one of these cases $(m_1, \ldots, m_{d'})$.
    
    The idea is first to fix the integers $p_{j,i}$ for those indices $i$ such that $m_i \neq 0$ using Lemma~\ref{lem-dim1}, and then to treat the other indices via the induction hypothesis.
    
    Let $i_1, \ldots, i_k$ be the indices $i$ for which $m_i \neq 0$. For $1 \leq \ell \leq 2N(d' - k)$ and all $n \in \{1, \ldots, k\}$, choose $p_{\widetilde{J} + \ell, i_n}$ such that~\eqref{eq-minG} holds for $j = \widetilde{J} + \ell$, $G^1_{\widetilde{J} + \ell, p_{\widetilde{J} + \ell, i_n}}$, and $z_{m_{i_n}}$. As in Lemma~\ref{lem-dim2}, for each $x_{i_n} \in B(2^{-J}(z_{i_n} + p_{J,i_n}), \omega)$, we have $|G^1_{\widetilde{J} + \ell, p_{\widetilde{J} + \ell, i_n}}(x_{i_n})| \geq \eta/2$. Thus, for all such integers $j \in \{\widetilde{J} + 1, \ldots, \widetilde{J} + 2N(d' - k)\}$,
    \[
        \prod_{i \in \{i_1, \ldots, i_k\}} |G^1_{j,p_{j,i}}(x_i)| \geq \left(\frac{\eta}{2}\right)^k.
    \]
    
    To handle the other coordinates $i'_1, \ldots, i'_{d' - k}$, we apply the induction hypothesis to $\prod_{i \in \{i'_1, \ldots, i'_{d' - k}\}} G^1_{\widetilde{J}, p_{\widetilde{J}, i}}$ with $j = \widetilde{J} + 1$: there exists a finite sequence of multi-integers $(p_{\widetilde{J} + \ell, i'_1}, \break \ldots, p_{\widetilde{J} + \ell, i'_{d' - k}})_{\ell \in \{0, \ldots, N(d' - k)\}}$ such that for every
    \[
        x' = (x'_{i'_1}, \ldots, x'_{i'_{d' - k}}) \in \widetilde{I} := \prod_{i \in \{i'_1, \ldots, i'_{d' - k}\}} [2^{-J} p_{J,i}, 2^{-J} (p_{J,i} + K_{\psi})],
    \]
    we have
    \[
        \prod_{i \in \{i'_1, \ldots, i'_{d' - k}\}} |G^1_{\widetilde{J} + \ell, p_{j,i}}(x'_i)| \geq \alpha^{d' - k},
    \]
    for some $\ell \in \{0, \ldots, N(d' - k)\}$.
    
    Thus, setting $\overline{p_j} = (p_{j,1}, \ldots, p_{j,d'})$ with the above values and $j \in \{\widetilde{J} + 1, \ldots, \widetilde{J} + N(d' - k)\}$, we ensure that for every $x \in S_{m_1, \ldots, m_{d'}}$, one of the functions satisfies $|G^{d'}_{j, \overline{p_j}}(x)| \geq \left(\frac{\eta}{2}\right)^k \alpha^{d' - k} \geq \alpha^{d'}$ for at least one $j \in \{\widetilde{J} + 1, \ldots, \widetilde{J} + N(d' - k)\}$.
    
    \smallskip
    We apply the same method $(N+1)^{d'} - N^{d'} - 1$ times to address all situations, each requiring at most $N(d' - 1)$ values $\overline{p_j}$ to be fixed.
    
    At this step, we have fixed a number, say $\widetilde{N}$, of values of $\overline{p_j}$. We note that~\eqref{def-N(d)} ensures that
    \[
        N(d') = 2N(d' - 1)^{d'} 2^{N(d' - 1)} N^{d'} \geq 2(N^{d'} + N(d' - 1)(N + 1)^{d'}) \geq 2 \widetilde{N}.
    \]
    Finally, 
    we complete the sequence $\overline p_j$ similarly until $j$ reaches $N(d) $. 
     
    \medskip
    We complete the sequence $\overline{p_j}$ as follows: assume that for all $j \in \{J, \ldots, J + kN(d')\}$ for some $k \geq 0$. The above construction provides a method to fix $\overline{p_j}$ over the next $N(d')$ integers, i.e., for $j \in \{J + kN(d')+1,..., J + (k+1)N(d')\}$.
    
    The construction ensures that Lemma~\ref{lem-dimd} holds for dimension $d'$, completing the induction.
\end{proof}

We are now in a position to complete the proof of Theorem~\ref{theo:lower_bound_periodised_wavelet_function} and to construct the sequence $\overline{p_j}$.

\medskip
\textbf{$\bullet$ The case $J = 0$:}
This is precisely Lemma~\ref{lem-dimd} in dimension $d$ applied to $G^d_{(0, \ldots, 0)}$. This gives the values of the multi-integers $\overline{p_j}$ for $j = 1, \ldots, N(d)$.

\medskip
\textbf{$\bullet$ The case $J = kN(d)$ with $k \geq 1$:}
For every $j \in \{J + 1, \ldots, J + N(d)\}$, set $\overline{p_j} = \overline{p_{j - J}}$.

Let $x = (x_1, \ldots, x_d) \in [0, 2^{-J} K_{\psi}]^d$. We apply Lemma~\ref{lem-dimd} to $G^d_{0, \overline{p_J}}$ at the point $x' = (2^J x_1, \ldots, 2^J x_d)$. There exists $j_{x'} \in \{0, \ldots, N(d)\}$ such that $|G^d_{j_{x'}, \overline{p_{j_{x'}}}}(x')| \geq \alpha^d$. But $G^d_{j_{x'}, \overline{p_{j_{x'}}}}(x') = G^d_{j + j_{x'}, \overline{p_{j_{x'}}}}(x)$, hence the result for $x = (x_1, \ldots, x_d) \in [0, 2^{-j} K_{\psi}]^d$.

The $2^{-J}K_{\psi}$-periodicity of the functions $G^d_{J + \ell}$ allows us to extend the results to all $x \in \R^d$.

This concludes the proof of Theorem~\ref{theo:lower_bound_periodised_wavelet_function}.

\section{Numerical proof of (R)}\label{sec-numeric}

We plot the graph of the function $S$ (see Definition~\ref{defi:Psi_spectrum_saturation_func}) for several Daubechies wavelets, specifically, the graph of $S$ associated with $\psi = \text{db3}$ and $\psi = \text{db7}$ are given in Figure \ref{fig-db1}, and that of $\psi = \text{db45}$ is shown in Figure \ref{fig-db2}. The plots were generated using the Wavelet Toolbox of MATLAB, and they clearly illustrate the non-vanishing property of $S$, provided that the errors in estimating $\psi$ are controlled. Let us now discuss a few observations regarding these results.

\begin{figure}
    \begin{subfigure}[h]{.45\linewidth}
        \centering
        \includegraphics[width=\linewidth]{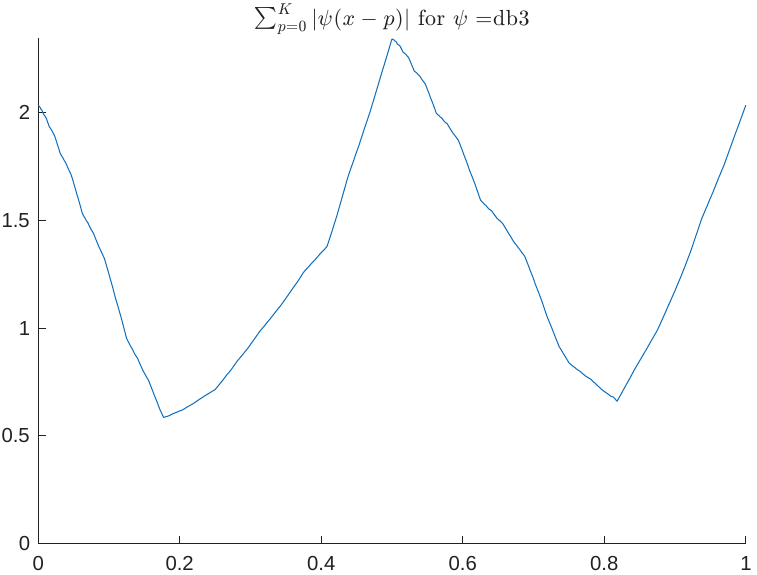}
        \caption{'db3' $\in C^1(\R)$ with 3 vanishing moments}
    \end{subfigure}
    \hfill
    \begin{subfigure}[h]{.45\linewidth}
        \centering
        \includegraphics[width=\linewidth]{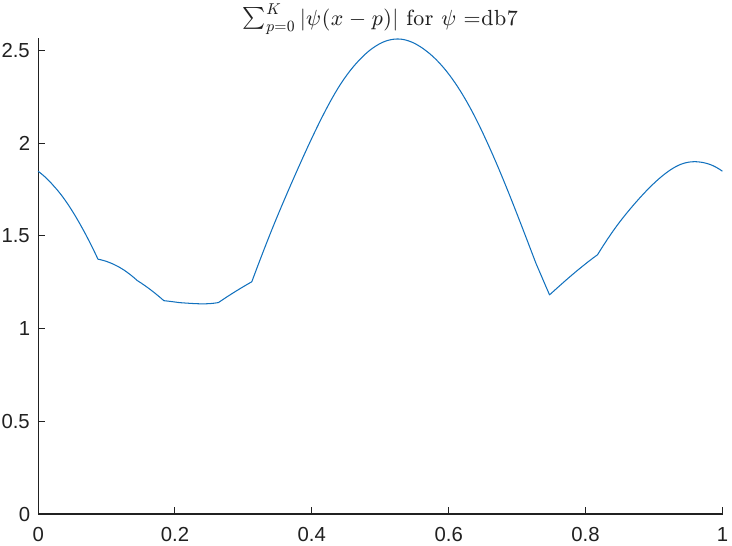}
        \caption{'db7' $\in C^2(\R)$ with 7 vanishing moments}
    \end{subfigure} 
    \caption{Daubechies wavelets 'db3' and 'db7' verify $(R)$. }
    \label{fig-db1}
\end{figure}

Let $\psi$ denote the Daubechies wavelet of order $p$, and $\varphi$ its associated scaling function. The relation between $\psi$ and $\varphi$ is
\begin{eqnarray*}
    \varphi(x) & =& \sqrt{2}\sum_{k\in \Z} h_k\, \varphi(2x-k)\\
    \psi(x) & = & \sqrt{2}\sum_{k\in \Z} g_k\, \varphi(2x-k),
\end{eqnarray*}
where $\{h_k\}_{k\in \Z} $ and $\{g_k\}_{k\in \Z} $ are the low-pass and high-pass filters.
The scaling function is computed via the cascade iteration
\[
    \varphi_{n+1}=T\varphi_n, \qquad (Tf)(x)=\sqrt{2}\sum_k h_k f(2x-k).
\]
An approximation of the wavelet, used by the standard algorithms, is given by
\[
    \psi_n(x) = \sqrt{2}\sum_k g_k\, \varphi_n(2x-k).
\]

The errors made when replacing $\psi$ by $\psi_n$ are of two types: the truncation errors and the roundoff errors.

Concerning the truncation error, it is known \cite{Daubechies:1992:ten_lectures, Cohen:2003:numerical_analysis_wavelet} that if $\varphi\in C^s(\R)$, then 
\[
    \|\varphi-\varphi_n\|_{C^r} \le C 2^{-ns}, \ \mbox { and so } \ \|\psi-\psi_n\|_{C^r} \le C 2^{-ns}.
\]
The constant $C$ is determined by the initial regularity of $ \varphi_0 $ used in the cascade algorithm and remains bounded by a small, controlled value.

Furthermore, floating-point perturbations inherent in computational imple\-mentations, affecting both the estimation of filter coefficients and all subsequent calculations, are on the order of $10^{-16}$. Even after 15 iterations, standard stability analysis of stationary subdivision schemes demonstrates that the cumulative error remains at most on the order of $10^{-14}$.

Collectively, provided that the wavelet belongs to $C^2$, the error after $n=15$ iterations of the cascade algorithm to estimate $\varphi$ and $\psi$ is less than $10^{-9}$. This result ensures the confidence provided by Figures~\ref{fig-db1} and~\ref{fig-db2} regarding the non-vanishing property of $S$ for Daubechies wavelets of reasonable order.

\begin{figure}
    \centering
    \includegraphics[scale=0.5]{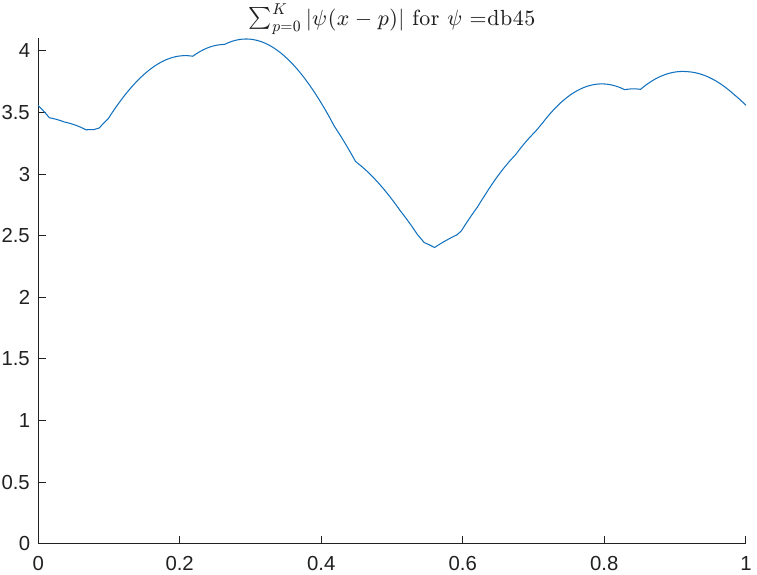}
    \caption{Function $S$ associated with 'db45': it is strictly positive.}
    \label{fig-db2}
\end{figure}

It is also worth noting that as the number $p$ of vanishing moments of the Daubechies wavelet $ \psi $ increases, the trade-off between the global regularity of $ \psi $ and the computational cost due to the larger filter banks becomes more challenging. Consequently, increasing the number of vanishing moments degrades the conditioning and reduces the number of reliable digits. Nevertheless, even for relatively high orders, the cascade algorithm in double precision remains stable and yields accurate approximations suitable for standard numerical applications. As illustrated in Figure~\ref{fig-db2}, the estimated function $S$ (obtained after 12 iterations of the cascade algorithm) is sufficiently bounded away from zero, ensuring its strict positivity.

\bibliographystyle{abbrv}
\bibliography{biblio.bib}

\end{document}